\documentclass[12pt,english,reqno]{amsart}  

\usepackage[T1]{fontenc} 
\usepackage{geometry} 
\usepackage{amssymb,amsmath,amsthm} 

\usepackage{pdfpages}
\usepackage{graphicx}
\usepackage{tikz} 
\usetikzlibrary{arrows,automata} 

\usepackage{hyperref}
\hypersetup{
    colorlinks=true,
    linkcolor=blue,
    filecolor=magenta,      
    urlcolor=cyan,
}

\usepackage[utf8]{inputenc}

\newtheorem{theorem}{Theorem}[section]
\newtheorem{corollary}[theorem]{Corollary}
\newtheorem{lemma}[theorem]{Lemma}

\newtheorem{definition}[theorem]{Definition}

\newcommand{\bfS}[1]{\mathbb{S}^{#1}} 
\newcommand{\ind}{\operatorname{ind}} 

\newcommand{\mbS}{\mathbb S}
\newcommand{\R}{\mathbb R}

\title[Unexpected multiplicity stability operator]{New Explicit Eigenfunctions of the stability operator on some minimal hypersurfaces}
\author{Oscar Perdomo}
\address{Central Connecticut State University}
\email{perdomoosm@ccsu.edu}
\date{\today}

\begin{document}

\maketitle

\begin{abstract}
For any $n$-dimensional compact minimal hypersurface of the $(n+1)$-dimensional sphere, we have that the coordinate functions of the Gauss map are eigenfunctions of the stability operator associated with the eigenvalue $-n$. In this paper we consider minimal  immersions from $\mbS^{k}\times\mbS^{\ell}\times\mbS^{1}$ to $\mbS^{k+\ell+2}$ of the form $\phi(y,z,t)=\left(f(t) y, f_2(t) z, f_1(t)\right)$  and we explicitly show new eigenfunctions for the stability operator associated with the same eigenvalue. We also show that the stability index of these minimal immersions is at least $k\ell+3k+3\ell+8$.
\end{abstract}

\section{Introduction}

Let $M\subset \mbS^{n+1}$ be a generalized hypersurface as defined in \cite{P1}. That is, for some hypersurface $M_0$ contained in the interior of the unit ball of $\R^{n-k+1}$, 

$$M=\{\left(\sqrt{1-|m|^2}\, y, m\right) : y\in\mbS^k, \, m\in M_0\}.$$

In the particular case when $M_0$ is a hypersurface  of revolution of the form $\left(f_2(t) z,f_1(t)\right)$, with  $\ell=n-k-1$, $z\in \mbS^{\ell}$ and $f_1(t)$ and $f_2(t)>0$ are periodic functions, the immersion $M$ can be written as

\begin{eqnarray}\label{ti}
\phi(y,z,t)=\left(f(t) y, f_2(t) z,f_1(t)\right)\quad\hbox{with}\quad f(t)=\sqrt{1-f_1(t)^2-f_2(t)^2}.
\end{eqnarray}

Carlotto and Schulz, \cite{CS}, showed the existence of embedded minimal hypersurfaces of the form \eqref{ti} for the case $k=\ell$. This collection of examples was extended in two different ways: (i) keeping the condition $k=\ell$ and changing the mean curvature condition $H=0$ to $H\ne 0$ \cite{HW,LW}, and (ii) keeping the condition $H=0$ and allowing $k$ and $\ell$ to be any pair of positive integers, \cite{FT, LWa}.

For the case $k=\ell=1$ the author showed the existence of a  CMC example  \cite{P4} by using the Poincare-Miranda theorem and a numerical method that keeps track of the round-off error \cite{P7}. The author has also shown numerical evidence that the list of embedded CMC examples that has been mathematically shown to exist is far from being complete, \cite{P5, P6}.


For a minimal hypersurface $M\subset \mbS^{n+1}$, the stability index of $M$, denoted by $\ind(M)$, is defined as the number of negative eigenvalues, counted with multiplicity, of the stability operator $J(f)=-\Delta f-nf-|A|^2f$, where $|A|^2=\kappa_1^2+\dots +\kappa_n^2$ and $\kappa_i$ are the principal curvatures of $M$. If the Gauss map $\nu:M\longrightarrow \mbS^{n+1}$ is  given by $\nu=\left( \nu_1,\dots, \nu_{n+2}\right)$, then the function $\nu_i$ satisfy that $J(\nu_i)=-n\nu_i$.  It is easy to check that when $M$ is not totally geodesic, the functions $\nu_i$ are linearly independent and therefore $-n$ is an eigenvalue of the stability operator with multiplicity at least $n+2$. Since the first eigenvalue of $J$ must have multiplicity $1$ we conclude that $\ind(M)\ge n+3$. The Clifford minimal hypersurfaces satisfy that $\ind(M)=n+3$ and a long-standing conjecture states that if $\ind(M)=n+3$ then $M$ must be a Clifford hypersurface. This conjecture has been verified only  in the case $n=2$ by Urbano \cite{U}. This result of Urbano was used in the proof of Willmore's Conjecture \cite{MN}.


With respect to the stability index of the Carlotto-Schulz minimal hypersurfaces, for $k=\ell=1$, Carlotto, Schulz, and Wiygul showed that the stability index is at least $8$, \cite{CSW}. The author showed that this index is at least $15$, \cite{P2}, and it can be shown numerically that the index is $27$. For the general case $k=\ell>1$, the stability index is at least $k^2+6k+8$, \cite{P2}, and the author has conjectured that the index is $(k+2)(k^2+10k+12)/3$. This conjecture has been numerically verified for $k=2,\dots,100$, \cite{P3}.

For the case $k=\ell$, the author has shown that the multiplicity of $-(2k+1)$ as an eigenvalue of the stability operator is at least $(2k+3)+(k+1)^2$, \cite{P3}. In this paper, we generalize this result to the case $k\ne \ell$ and show that the multiplicity of $-(k+\ell+1)$ as an eigenvalue of the stability operator is at least $(k+\ell+3)+(k+1)(\ell+1)$.

\section{New eigenfunctions contributing to the stability index}

We have the following expression for the Laplacian of functions on the immersion \eqref{ti}. Its proof can be found in \cite{P1}.

\begin{lemma}\label{formula-laplacian}
Let $\zeta(t,y,z)$ be a smooth function defined along the immersion

$$
\phi(y,z,t)=\left(f(t)y,f_2(t)z,f_1(t)\right)
\quad\hbox{with}\quad
f(t)=\sqrt{1-f_1(t)^2-f_2(t)^2}.
$$

Then
\[
\Delta\zeta
=\frac{1}{1+(f^\prime)^2}\left( \zeta_{tt}
+b_{k\ell} \zeta_t\right)
+\frac{1}{f^2}\Delta_{\bfS{k}}^y \zeta
+\frac{1}{f_2^2}\Delta_{\bfS{\ell}}^z \zeta,
\]
where
\[
b_{k\ell}=k\frac{f'}{f}+\ell \frac{f_2'}{f_2}-\frac{f'f''}{1+(f')^2},
\]
and $\Delta_{\bfS{k}}^y\zeta$ (respectively, $\Delta_{\bfS{\ell}}^z\zeta$) denotes the Laplacian on the sphere $\bfS{k}$ acting on the $y$-variable (respectively, on $\bfS{\ell}$ acting on the $z$-variable), with $t$ and $z$ (respectively, $t$ and $y$) held fixed.
\end{lemma}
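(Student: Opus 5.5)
The plan is to compute the induced metric of $\phi$ and then apply the coordinate formula $\Delta\zeta=\frac{1}{\sqrt{\det g}}\partial_i\bigl(\sqrt{\det g}\,g^{ij}\partial_j\zeta\bigr)$. The metric turns out to be a warped product of the $t$-line with the two round spheres. As the statement is written, the formula needs a normalization of the profile curve that I take to be the standing convention of \cite{P1}: the curve $t\mapsto (f_1(t),f_2(t))$ is parametrized by arc length, $(f_1')^2+(f_2')^2=1$. Otherwise the factor $1+(f')^2$ would have to be replaced by $(f')^2+(f_1')^2+(f_2')^2$. I will state this assumption explicitly at the start.

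\textbf{Step 1: the induced metric.} Differentiate $\phi$ to get
\[
d\phi=\bigl(f'\,y\,dt+f\,dy,\; f_2'\,z\,dt+f_2\,dz,\; f_1'\,dt\bigr).
\]
Use $|y|=1$ and $|z|=1$, so that $y\cdot dy=0$ and $z\cdot dz=0$. All cross terms between $dt$ and the sphere directions vanish, and
\[
g=\bigl((f')^2+(f_2')^2+(f_1')^2\bigr)dt^2+f^2\,g_{\bfS{k}}+f_2^2\,g_{\bfS{\ell}} .
\]
Under the arc-length normalization this gives $g_{tt}=1+(f')^2$. Since $f>0$ and $f_2>0$, the map is an immersion and local coordinates $(t,u,w)$, with $u$ coordinates on $\bfS{k}$ and $w$ coordinates on $\bfS{\ell}$, give a block-diagonal metric.

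\textbf{Step 2: the volume density and the Laplacian.} In these coordinates
\[
\sqrt{\det g}=\sqrt{1+(f')^2}\,f^k f_2^{\ell}\,\sqrt{\det g_{\bfS{k}}}\,\sqrt{\det g_{\bfS{\ell}}},
\]
and $g^{tt}=1/(1+(f')^2)$. The inverse metric on the sphere blocks is $f^{-2}g_{\bfS{k}}^{-1}$ and $f_2^{-2}g_{\bfS{\ell}}^{-1}$.

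The $u$-derivative terms do not see the $t$-dependent factors, which pull out of the $u$-derivatives. They therefore give exactly $f^{-2}\Delta^y_{\bfS{k}}\zeta$, and similarly the $w$-terms give $f_2^{-2}\Delta^z_{\bfS{\ell}}\zeta$.

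The $t$-term is
\[
\frac{1}{\sqrt{1+(f')^2}\,f^kf_2^\ell}\,\partial_t\!\left(\frac{f^kf_2^\ell}{\sqrt{1+(f')^2}}\,\zeta_t\right)
=\frac{1}{1+(f')^2}\bigl(\zeta_{tt}+b\,\zeta_t\bigr).
\]
Here $b$ is the logarithmic derivative of $f^kf_2^\ell(1+(f')^2)^{-1/2}$, namely
\[
b=k\frac{f'}{f}+\ell\frac{f_2'}{f_2}-\frac{f'f''}{1+(f')^2}=b_{k\ell}.
\]

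The computation is routine. The only delicate point is Step 1: checking that the mixed terms vanish and identifying $g_{tt}$ with $1+(f')^2$, which is exactly where the arc-length hypothesis on $(f_1,f_2)$ enters.
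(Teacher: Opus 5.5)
Your proof is correct: under the arc-length convention the induced metric is $\bigl(1+(f')^2\bigr)dt^2+f^2g_{\bfS{k}}+f_2^2g_{\bfS{\ell}}$, and the coordinate formula for the Laplacian gives $b_{k\ell}$ as the logarithmic derivative of $f^kf_2^\ell\bigl(1+(f')^2\bigr)^{-1/2}$. The paper gives no argument of its own and simply cites \cite{P1}; your direct warped-product computation is the standard derivation, and you are right that arc-length parametrization is needed, which is the convention the paper adopts explicitly at the start of the proof of Theorem~\ref{thm1}.
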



We now prove the main theorem in this paper.

\begin{theorem}\label{thm1}  Let \(f_1\) and \(f_2\) be the functions that define a minimal immersion of the form \eqref{ti}. For any $i\in\{1,\dots, k+1\}$ and $j\in\{1,\dots, \ell+1\}$, the functions   

$$\zeta_{ij}=\omega(t)y_iz_j \quad\hbox{with}\quad \omega = f^{-k}f_2^{-\ell}$$ 

satisfy that

$$J(\zeta_{ij})=-n\zeta_{ij}$$
\end{theorem}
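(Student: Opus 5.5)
The plan is to separate variables and reduce the statement to a single ODE in $t$, then check that ODE by direct computation using the minimality equation. Since $y_i$ is a restriction of a linear function to $\bfS{k}$ and $z_j$ one to $\bfS{\ell}$, we have $\Delta^y_{\bfS{k}}y_i=-k\,y_i$ and $\Delta^z_{\bfS{\ell}}z_j=-\ell\,z_j$. Lemma \ref{formula-laplacian} then gives
\[
\Delta\zeta_{ij}=\Big(\frac{\omega''+b_{k\ell}\,\omega'}{1+(f')^2}-\frac{k}{f^2}\,\omega-\frac{\ell}{f_2^2}\,\omega\Big)y_iz_j .
\]
Because $J(\zeta)+n\zeta=-\Delta\zeta-|A|^2\zeta$, the theorem is equivalent to the ODE
\[
\frac{\omega''+b_{k\ell}\,\omega'}{1+(f')^2}+\Big(|A|^2-\frac{k}{f^2}-\frac{\ell}{f_2^2}\Big)\omega=0,\qquad \omega=f^{-k}f_2^{-\ell}.
\]

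Next I will simplify the first-order part. Since $\omega'/\omega=-(kf'/f+\ell f_2'/f_2)=:-\beta$, we have $\omega''/\omega=\beta^2-\beta'$. Also $b_{k\ell}=\beta-\frac12\big(\log(1+(f')^2)\big)'$. So the ODE becomes the identity
\[
-\beta'+\tfrac12\beta\big(\log(1+(f')^2)\big)'+\big(1+(f')^2\big)\Big(|A|^2-\frac{k}{f^2}-\frac{\ell}{f_2^2}\Big)=0,
\]
which involves only $f,f_1,f_2$ and their derivatives. The parametrization implicit in Lemma \ref{formula-laplacian} (the factor $1+(f')^2$ is the metric coefficient in $t$) should be kept fixed throughout, as in \cite{P1}.

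The main step is to compute the principal curvatures in the same parametrization. Two of them come from the sphere factors: on the $\bfS{k}$-directions the curvature is $\kappa_y=-\langle \nu,(y,0,0)\rangle/f$ up to sign, and similarly $\kappa_z$ on the $\bfS{\ell}$-directions. I will write the Gauss map as $\nu=(g(t)y,\,g_2(t)z,\,g_1(t))$, where $(g,g_2,g_1)$ is the unit vector orthogonal to $(f,f_2,f_1)$ and to $(f',f_2',f_1')$, i.e. their normalized cross product. This gives $\kappa_y=-g/f$ and $\kappa_z=-g_2/f_2$. The third curvature $\kappa_t$ is the profile curvature, and minimality gives $k\kappa_y+\ell\kappa_z+\kappa_t=0$. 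Hence
\[
|A|^2=k\kappa_y^2+\ell\kappa_z^2+(k\kappa_y+\ell\kappa_z)^2,
\]
so $|A|^2$ depends only on first derivatives, and the minimality ODE can be used to eliminate the second derivatives appearing in $\beta'$ and in $(\log(1+(f')^2))'$. After that substitution the identity should collapse algebraically, using only $f^2+f_1^2+f_2^2=1$ and its derivative $ff'+f_1f_1'+f_2f_2'=0$.

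I expect this elimination to be the main obstacle, because the algebra is heavy. To keep it manageable, I would first try the shortcut of expressing everything through $g,g_2$. Their derivatives are $g'=\kappa_t f'$-type expressions, by the Weingarten equation along the profile curve. Moreover $\beta$ is essentially the logarithmic derivative of the area density $f^kf_2^\ell$, whose derivative is governed by the mean curvature. As a consistency check I would use the known facts $J(\nu_i)=-n\nu_i$ for $\nu_i=g\,y_i$. The Jacobi field coming from the rotation in the $(y_i,z_j)$-plane, $(fg_2-f_2g)\,y_iz_j$, satisfies $J=0$, i.e. it solves the same ODE with an extra term $+n$. This should help me spot sign errors in the final identity.
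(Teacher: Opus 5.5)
Your reduction is correct and matches the paper's. With the paper's $u=\log\omega$ one has $u'=-\beta$, and your identity $-\beta'+\tfrac12\beta L'+(1+(f')^2)\bigl(|A|^2-k/f^2-\ell/f_2^2\bigr)=0$, where $L=\log(1+(f')^2)$, is exactly the paper's $Q=0$ multiplied by $1+(f')^2$. Your formulas $\kappa_y=-g/f$ and $\kappa_z=-g_2/f_2$ are right. Writing $|A|^2$ through $\kappa_t=-(k\kappa_y+\ell\kappa_z)$ is a legitimate variation; the paper instead computes $\kappa_t=(f^2\theta'+\xi_2h^2)/h^3$ and substitutes the minimality equation for $\theta'$ only at the end.

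The gap is that the proposal stops exactly where the content of the theorem begins. The sentence ``after that substitution the identity should collapse algebraically'' is the theorem itself. You never write the minimality equation solved for the second-order quantity, never compute $\beta'$ or $L'$, and never exhibit the cancellation. The remarks on $g'$, on the area density, and on the Killing field $(fg_2-f_2g)y_iz_j$ are good sanity checks, but they are not steps of a proof.

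Two further points. First, $f^2+f_1^2+f_2^2=1$ and its derivative are not the only relations needed: the lemma presupposes the normalization $f_1'^2+f_2'^2=1$, and you must use it too. Second, your shortcut does not avoid expressing $\kappa_t$ through second derivatives, because that expression is precisely the minimality equation you need for the elimination.

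To close the gap along your own lines, set
$f_1'=\cos\theta$, $f_2'=\sin\theta$, $\xi_1=f_1\cos\theta+f_2\sin\theta$, $\xi_2=f_2\cos\theta-f_1\sin\theta$, $h^2=1-\xi_2^2=f^2+\xi_1^2$, $C=\cos\theta/f_2$ and $S=\sin\theta/f_2$. Then:
\begin{itemize}
\item $f'=-\xi_1/f$ and $1+(f')^2=h^2/f^2$;
\item $\kappa_y^2=\xi_2^2/h^2$ and $\kappa_z^2=(\xi_2-C)^2/h^2$;
\item minimality reads $f^2\theta'/h^2=\ell C-n\xi_2$.
\end{itemize}
Your route then gives $h^2|A|^2=n(n-1)\xi_2^2-2n\ell\xi_2C+\ell(\ell+1)C^2$. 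Using $\xi_1'=1+\xi_2\theta'$ and $\xi_2'=-\xi_1\theta'$, a direct computation gives
\[
-\beta'+\tfrac12\beta L'=\frac{kh^2}{f^4}+\ell S^2+\frac{\ell\xi_1S}{f^2}+\Bigl(\frac{k\xi_2}{h^2}+\frac{\ell\xi_1\xi_2S}{h^2}-\ell C\Bigr)\theta'.
\]
Now multiply your identity by $f^2$ and substitute for $\theta'$. Using $n=k+\ell+1$ and $\xi_2C+\xi_1S=1$ (that is, $f_2=\xi_2\cos\theta+\xi_1\sin\theta$), everything reduces to $\ell\bigl(f^2S^2+\xi_1S+C^2-\xi_2C-h^2/f_2^2\bigr)$. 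This is $\ell/f_2^2$ times the paper's final expression, and it vanishes identically when expanded in $f_1,f_2,\theta$. The statement is proved only once this computation is written out.
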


\begin{proof}

Since $\Delta_{\bfS{k}}^y y_i=-ky_i$ and $\Delta_{\bfS{\ell}}^z z_j=-\ell z_j$,  from Lemma \ref{formula-laplacian}, we need to show that 

\begin{eqnarray}\label{maineqn}
\frac{1}{1+(f^\prime)^2}\left( \omega''
+b_{k\ell} \omega'\right)+|A|^2\omega
-\frac{k}{f^2}\omega
-\frac{\ell}{f_2^2}\omega=0,
\end{eqnarray}

The rest of this proof will be devoted to showing the details of this identity. 

We assume that the curve $(f_1,f_2)$ (which will be called the profile curve) is parametrized by arc-length and 
we will be using the so-called TreadmillSled coordinates introduced by the author in the
study of helicoidal surfaces (see for example \cite{P9, P10}). The TreadmillSled coordinates $(\xi_1,\xi_2)$ of the curve $(f_1,f_2)$ are defined by

\begin{eqnarray*}
\xi_1=f_1\cos(\theta)+f_2\sin(\theta)\quad\hbox{and}\quad \xi_2=f_2\cos(\theta)-f_1\sin(\theta)
\end{eqnarray*}

From \cite{P1} we know that the minimality of the immersion $M$ is equivalent to the following system of differential equations:
\[
f_1'=\cos\theta,\qquad
f_2'=\sin\theta,\qquad
\theta'=\frac{h^2}{f_2f^2}\bigl(\ell\cos\theta-nf_2\xi_2\bigr),
\]

where

\begin{equation}\label{defh}
h=\sqrt{1-\xi_2^2}
\end{equation}

The following identities can be verified directly.

\begin{eqnarray}
h^2&=&f^2+\xi_1^2\\
 f'&=&-\frac{\xi_1}{f}\\
  \xi_1'&=&1+\xi_2\theta'\\
  \xi_2'&=&-\xi_1\theta'\\
  f''&=&-\frac{h^2+f^2\xi_2\theta'}{f^3}\\
  a_1&=&\frac{1}{1+(f')^2}=\frac{f^2}{h^2}
\end{eqnarray}

We point out that there is a typo in the last formula in \cite{P1}, where $a_1$ appears as $\frac{h^2}{f^2}$ instead of $\frac{f^2}{h^2}$. There are three principal curvatures: $\kappa_y$ with multiplicity $k$ and principal directions tangent to the $\mbS^k$ factor, $\kappa_z$ with multiplicity $\ell$ and principal directions tangent to the $\mbS^{\ell}$ factor, and $\kappa_t$ with multiplicity one and principal direction $\frac{\partial \phi}{\partial t}$.

Using the formulas obtained in  \cite{P1} and the identities above we obtain that 
\begin{eqnarray}
\kappa_y&=&\frac{\xi_2}{h} \\
\kappa_z&=&\frac{\xi_2f_2-\cos\theta}{hf_2} \\
\kappa_t&=&\frac{\xi_2+\theta'}{h}-\frac{(ff')^2\theta'}{h^3 }=\frac{f^2\theta'+\xi_2 h^2}{h^3}
\end{eqnarray}

Therefore, $|A|^2=k\kappa_y^2+\ell \kappa_z^2+\kappa_t^2$ reduces to,

$$|A|^2=n\frac{\xi_2^2}{h^2}+\ell \frac{\cos^2\theta}{h^2f_2^2}-2\ell \cos\theta \frac{\xi_2}{h^2f_2}+\frac{f^2\theta'}{h^4}\left(\ell \frac{\cos\theta}{f_2}-(n-2)\xi_2\right) $$

If we define

$$u(t)=\log \omega(t)=-k\log f(t)-\ell\log f_2(t),$$

then we have that

\[
\omega'=u'\omega,
\qquad
\omega''=(u''+(u')^2)\omega.
\]

A direct verification shows that the expression on the left-hand side of Equation \eqref{maineqn}, divided by \(\omega\), reduces to

$$Q=
\frac{1}{1+(f')^2}\left(u''+(u')^2+b_{k\ell}u'\right)
+|A|^2-\frac{k}{f^2}-\frac{\ell}{f_2^2}.
$$

We have that $u'=k\frac{\xi_1}{f^2}-\ell\frac{\sin\theta}{f_2}$ and 

\[
u''=k\left(\frac{2\xi_1^2}{f^4}
+\frac{1}{f^2}
+\frac{\xi_2}{f^2}\theta'\right)
+\ell \frac{\sin^2\theta}{f_2^2}
-\ell\frac{\theta'}{f_2}\cos\theta.
\]

Since $b_{k\ell}=-u'-a_1 f'f''$, the quantity $Q$, which we want to show vanishes, reduces to
\[
Q=
\frac{f^2}{h^2}
\left(
u''-\frac{f^2}{h^2}u'f'f''
\right)
+|A|^2-\frac{k}{f^2}-\frac{\ell}{f_2^2}.
\]

Using the identities for \(u'\), \(f'\), and \(f''\), we obtain

$$\frac{f^2}{h^2}u'f'f''=k\left(\frac{\xi_1^2}{f^4}+\frac{\xi_2\xi_1^2}{f^2h^2}\theta'\right)-\frac{\ell \xi_1\sin\theta}{f^2f_2}-\frac{\ell\xi_1\xi_2\sin\theta}{h^2f_2}\theta'$$

Using the formula for \(u''\) and the identities above, we obtain
\[
u''-\frac{f^2}{h^2}u'f'f''
=
k\frac{h^2}{f^4}
+\ell\frac{\sin^2\theta}{f_2^2}
+\ell\frac{\xi_1\sin\theta}{f_2f^2}
+
\left(
\ell\frac{\xi_1\xi_2\sin\theta}{f_2h^2}
+k\frac{\xi_2}{h^2}
-\ell\frac{\cos\theta}{f_2}
\right)\theta'.
\]
Multiplying by \(\frac{f^2}{h^2}\), we get
\[
\frac{f^2}{h^2}
\left(
u''-\frac{f^2}{h^2}u'f'f''
\right)
=
\frac{k}{f^2}
+\ell\frac{f^2\sin^2\theta}{h^2f_2^2}
+\ell\frac{\xi_1\sin\theta}{f_2h^2}
+
\left(
\ell\frac{f^2\xi_1\xi_2\sin\theta}{f_2h^4}
+k\frac{f^2\xi_2}{h^4}
-\ell\frac{f^2\cos\theta}{h^2f_2}
\right)\theta'.
\]

Using the formula for \(|A|^2\), we have
\[
\begin{aligned}
Q
={}&
\frac{k}{f^2}
+\ell\frac{f^2\sin^2\theta}{h^2f_2^2}
+\ell\frac{\xi_1\sin\theta}{f_2h^2} \\
&+
\left(
\ell\frac{f^2\xi_1\xi_2\sin\theta}{f_2h^4}
+k\frac{f^2\xi_2}{h^4}
-\ell\frac{f^2\cos\theta}{h^2f_2}
\right)\theta' \\
&+
n\frac{\xi_2^2}{h^2}
+\ell\frac{\cos^2\theta}{h^2f_2^2}
-2\ell\frac{\xi_2\cos\theta}{h^2f_2} \\
&+
\frac{f^2\theta'}{h^4}
\left(
\ell\frac{\cos\theta}{f_2}
-(n-2)\xi_2
\right)
-\frac{k}{f^2}
-\frac{\ell}{f_2^2}.
\end{aligned}
\]
The terms \(\frac{k}{f^2}\) and \(-\frac{k}{f^2}\) cancel. Hence
\[
\begin{aligned}
Q
={}&
\ell\frac{f^2\sin^2\theta}{h^2f_2^2}
+\ell\frac{\xi_1\sin\theta}{f_2h^2}
+n\frac{\xi_2^2}{h^2}
+\ell\frac{\cos^2\theta}{h^2f_2^2}
-2\ell\frac{\xi_2\cos\theta}{h^2f_2}
-\frac{\ell}{f_2^2} \\
&+
\left(
\ell\frac{f^2\xi_1\xi_2\sin\theta}{f_2h^4}
+k\frac{f^2\xi_2}{h^4}
-\ell\frac{f^2\cos\theta}{h^2f_2}
+\ell\frac{f^2\cos\theta}{h^4f_2}
-(n-2)\frac{f^2\xi_2}{h^4}
\right)\theta'.
\end{aligned}
\]


We now simplify the coefficient of \(\theta'\). Since \(n=k+\ell+1\), we have
\[
k-(n-2)=1-\ell.
\]
Therefore
\[
\begin{aligned}
&\ell\frac{f^2\xi_1\xi_2\sin\theta}{f_2h^4}
+k\frac{f^2\xi_2}{h^4}
-\ell\frac{f^2\cos\theta}{h^2f_2}
+\ell\frac{f^2\cos\theta}{h^4f_2}
-(n-2)\frac{f^2\xi_2}{h^4} \\
&=
\frac{f^2}{h^4}
\left(
\ell\frac{\xi_1\xi_2\sin\theta}{f_2}
+(1-\ell)\xi_2
-\ell\frac{h^2\cos\theta}{f_2}
+\ell\frac{\cos\theta}{f_2}
\right).
\end{aligned}
\]


Using \(h^2=1-\xi_2^2\), we get
\[
-\ell\frac{h^2\cos\theta}{f_2}
+\ell\frac{\cos\theta}{f_2}
=
\ell\frac{\xi_2^2\cos\theta}{f_2}.
\]
Hence the coefficient of \(\theta'\) in the last expression for $Q$ is
\[
\frac{f^2}{h^4}
\left(
\ell\frac{\xi_1\xi_2\sin\theta}{f_2}
+\ell\frac{\xi_2^2\cos\theta}{f_2}
+(1-\ell)\xi_2
\right).
\]
Factoring \(\xi_2\), this becomes
\[
\frac{f^2\xi_2}{h^4}
\left(
\ell\frac{\xi_1\sin\theta+\xi_2\cos\theta}{f_2}
+1-\ell
\right).
\]
Since
\[
f_2=\xi_2\cos\theta+\xi_1\sin\theta,
\]
we obtain
\[
\ell\frac{\xi_1\sin\theta+\xi_2\cos\theta}{f_2}
+1-\ell
=
\ell+1-\ell=1.
\]
Therefore the coefficient of \(\theta'\) is
\[
\frac{f^2\xi_2}{h^4}.
\]


It follows that
\[
Q=
\ell\frac{f^2\sin^2\theta}{h^2f_2^2}
+\ell\frac{\xi_1\sin\theta}{f_2h^2}
+n\frac{\xi_2^2}{h^2}
+\ell\frac{\cos^2\theta}{h^2f_2^2}
-2\ell\frac{\xi_2\cos\theta}{h^2f_2}
-\frac{\ell}{f_2^2}
+
\frac{f^2\xi_2}{h^4}\theta'.
\]
Using the differential equation
\[
\theta'
=
\frac{h^2}{f_2f^2}
\left(
\ell\cos\theta-nf_2\xi_2
\right),
\]
we obtain
\[
\frac{f^2\xi_2}{h^4}\theta'
=
\frac{\xi_2}{f_2h^2}
\left(
\ell\cos\theta-nf_2\xi_2
\right).
\]
Therefore
\[
Q=
\ell\frac{f^2\sin^2\theta}{h^2f_2^2}
+\ell\frac{\xi_1\sin\theta}{f_2h^2}
+n\frac{\xi_2^2}{h^2}
+\ell\frac{\cos^2\theta}{h^2f_2^2}
-2\ell\frac{\xi_2\cos\theta}{h^2f_2}
-\frac{\ell}{f_2^2}
+
\frac{\xi_2}{f_2h^2}
\left(
\ell\cos\theta-nf_2\xi_2
\right).
\]


Multiplying by \(h^2f_2^2\), it is enough to verify that

\[
\ell f^2\sin^2\theta
+\ell\xi_1f_2\sin\theta
+n\xi_2^2f_2^2
+\ell\cos^2\theta
-2\ell\xi_2f_2\cos\theta
-\ell h^2
+\xi_2f_2
\left(
\ell\cos\theta-nf_2\xi_2
\right)
=0.
\]
The terms involving \(n\) cancel, and after factoring \(\ell\), it remains to show that
\[
f^2\sin^2\theta
+\xi_1f_2\sin\theta
+\cos^2\theta
-\xi_2f_2\cos\theta
-h^2
=0.
\]


Using
\[
f^2=1-f_1^2-f_2^2,
\qquad
h^2=1-\xi_2^2,
\]
the left-hand side becomes
\[
-(f_1^2+f_2^2)\sin^2\theta
+\xi_1f_2\sin\theta
-\xi_2f_2\cos\theta
+\xi_2^2.
\]
Finally, substituting
\[
\xi_1=f_1\cos\theta+f_2\sin\theta,
\qquad
\xi_2=f_2\cos\theta-f_1\sin\theta,
\]
we obtain
\[
\begin{aligned}
&-(f_1^2+f_2^2)\sin^2\theta
+f_2\sin\theta
\left(
f_1\cos\theta+f_2\sin\theta
\right) \\
&\quad
-f_2\cos\theta
\left(
f_2\cos\theta-f_1\sin\theta
\right)
+
\left(
f_2\cos\theta-f_1\sin\theta
\right)^2.
\end{aligned}
\]
Expanding the last expression gives
\[
\begin{aligned}
&-f_1^2\sin^2\theta
-f_2^2\sin^2\theta
+f_1f_2\sin\theta\cos\theta
+f_2^2\sin^2\theta \\
&\quad
-f_2^2\cos^2\theta
+f_1f_2\sin\theta\cos\theta
+f_2^2\cos^2\theta
-2f_1f_2\sin\theta\cos\theta
+f_1^2\sin^2\theta \\
&=0.
\end{aligned}
\]
Therefore \(Q=0\), and Equation \eqref{maineqn} follows.

\end{proof}

For the Carlotto--Schulz minimal examples, the author has shown that their stability index is at least \(k^2+6k+8\); recall that in this case \(k=\ell\). It is unknown if there are more embedded minimal generalized rotational hypersurfaces with \(k=\ell\). In general, for any \(k\) and \(\ell\), regardless of whether the immersion is embedded or not, the stability index of the immersion \eqref{ti} is at least \(k\ell+3k+3\ell+8\). Let us see this.

\begin{corollary}
If the functions \(f_1\) and \(f_2>0\) are \(T\)-periodic, \(f=\sqrt{1-f_1^2-f_2^2}>0\), and the immersion \eqref{ti} is minimal, then the stability index of this immersion is at least \(k\ell+3k+3\ell+8\).
\end{corollary}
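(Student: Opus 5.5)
The plan is to separate variables and count negative eigenvalues of $J$ sector by sector. Write functions on $\mbS^{k}\times\mbS^{\ell}\times\mbS^{1}$ as $w(t)Y(y)Z(z)$, where $Y$ and $Z$ are spherical harmonics of degrees $p$ and $q$. By Lemma~\ref{formula-laplacian}, $J$ preserves each sector and acts there as the periodic Sturm--Liouville operator
\[
L_{pq}w=-a_1\left(w''+b_{k\ell}w'\right)+\left(\frac{p(p+k-1)}{f^2}+\frac{q(q+\ell-1)}{f_2^2}-n-|A|^2\right)w .
\]
$L_{pq}$ is self-adjoint with respect to the weight induced by the volume element $f^kf_2^{\ell}\sqrt{1+(f')^2}\,dt$. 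Each eigenvalue of $L_{pq}$ contributes with multiplicity equal to the dimension of the corresponding harmonic space. So it suffices to find enough negative eigenvalues of $L_{00}$, $L_{10}$, $L_{01}$ and $L_{11}$. Recall $n=k+\ell+1$.

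\emph{Sector $(1,1)$.} By Theorem~\ref{thm1}, $\omega=f^{-k}f_2^{-\ell}>0$ satisfies $L_{11}\omega=-n\omega$. Thus $-n$ is a negative eigenvalue there, giving $(k+1)(\ell+1)$ negative eigenvalues of $J$.

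\emph{Sectors $(1,0)$ and $(0,1)$.} The Gauss map components $\langle\nu,(e_i,0,0)\rangle$ have the form $g(t)y_i$, and $L_{10}g=-n g$. I would show that $g$ changes sign on a period. For that I would use the Jacobi fields coming from the rotations of $\R^{n+2}$ that mix the $y$-block with the last coordinate. These give $L_{10}$-eigenfunctions with eigenvalue $0$, of the form $\langle\nu,(e_i,0,0)\rangle f_1-\nu_{n+2}\,f\,y_i$ up to the $y_i$ factor. Hence $L_{10}$ has $-n$ as a non-lowest eigenvalue as soon as $g$ is not of one sign. In that case the ground state of $L_{10}$ lies strictly below $-n$, and $L_{10}$ has at least two negative eigenvalues. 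This gives $2(k+1)$ negative eigenvalues of $J$, and symmetrically $2(\ell+1)$ from $L_{01}$.

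\emph{Sector $(0,0)$.} Here $\nu_{n+2}(t)$ is an eigenfunction of $L_{00}$ with eigenvalue $-n$, and the first eigenfunction of $J$ also lives in this sector. Adding up,
\[
(k+1)(\ell+1)+2(k+1)+2(\ell+1)+m=k\ell+3k+3\ell+5+m,
\]
where $m$ is the number of negative eigenvalues of $L_{00}$. So I need $m\ge 3$. The tool is the oscillation theory for periodic Hill-type operators: the eigenvalues satisfy $\lambda_0<\lambda_1\le\lambda_2<\lambda_3\le\lambda_4<\cdots$, and eigenfunctions for $\lambda_{2m-1},\lambda_{2m}$ have exactly $2m$ zeros per period. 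Thus if $\nu_{n+2}$ has at least $4$ zeros per period, then $-n\ge\lambda_3$, so $\lambda_0,\lambda_1,\lambda_2<0$ and $m\ge 3$. The same zero-counting argument is what makes the sectors $(1,0)$ and $(0,1)$ rigorous, where $2$ zeros of $g$ suffice.

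\emph{Main obstacle.} The hardest step will be this zero counting for the Gauss map components along the profile curve, especially the four sign changes of $\nu_{n+2}$. I would first try to express $\nu_{n+2}$ and $g$ in TreadmillSled coordinates $(\xi_1,\xi_2,\theta)$. Then I would use periodicity of $\theta$ (the profile curve closes up) together with the ODE $\theta'=\frac{h^2}{f_2f^2}(\ell\cos\theta-nf_2\xi_2)$ to force $\theta$ to wind through the angles where these components vanish. If the zero count for $\nu_{n+2}$ fails, the fallback is to compare with the Killing Jacobi field in sector $(0,0)$, namely the one from rotating the $z$-block against the last coordinate. I would then use Sturm comparison between it and $\nu_{n+2}$ to obtain the missing negative eigenvalue.
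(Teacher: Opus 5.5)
Your sector decomposition, the weight $f^kf_2^{\ell}\sqrt{1+(f')^2}$, and the count $(k+1)(\ell+1)$ from sector $(1,1)$ are all fine. The problem is the other three sectors: they rest on zero-counting claims you never prove, namely that $g$ changes sign and that $\nu_{n+2}$ has four zeros per period. The tools you point to do not supply these.

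\begin{itemize}
\item The $0$-eigenfunction $gf_1-\nu_{n+2}f$ of $L_{10}$ says nothing about whether the $-n$-eigenfunction $g$ is the ground state. Your conditional ``$-n$ is non-lowest as soon as $g$ is not of one sign'' is just the positivity of ground states, and it leaves the sign change of $g$ unproved.
\item The fallback in sector $(0,0)$ fails outright. Rotating the $z$-block against the last coordinate gives a Jacobi field of the form $c(t)\,z_j$, which lies in sector $(0,1)$. In fact sector $(0,0)$ contains no nonzero Killing Jacobi field at all, since the only skew-symmetric matrix commuting with $O(k+1)\times O(\ell+1)$ is zero.
\item The four-zero target is stronger than what is needed. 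The correct argument only gives $-n\ge\lambda_2(L_{00})$, which is compatible with $\nu_{n+2}$ having just two zeros. You may be chasing a statement that is false for some of these immersions.
\end{itemize}

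The missing idea is to compare sectors rather than count zeros. The four operators $L_{00},L_{10},L_{01},L_{11}$ share the same principal part and the same weight. They differ only by the positive potentials $k/f^2$ and $\ell/f_2^2$. So by min--max, for every $i$,
$$\lambda_i(L_{00})<\lambda_i(L_{01})<\lambda_i(L_{11})\quad\hbox{and}\quad \lambda_i(L_{00})<\lambda_i(L_{10})<\lambda_i(L_{11}).$$
The argument then runs as follows.
\begin{itemize}
\item Since $-n$ is an eigenvalue of $L_{11}$ (Theorem~\ref{thm1}), both $\lambda_0(L_{10})$ and $\lambda_0(L_{01})$ are strictly below $-n$.
\item Since $-n$ is also an eigenvalue of $L_{10}$ and $L_{01}$ (Gauss map), it follows that $\lambda_1(L_{10})\le -n$ and $\lambda_1(L_{01})\le -n$.
\item Then $\lambda_1(L_{00})<\lambda_1(L_{01})\le -n$. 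Since $\nu_{n+2}$ makes $-n$ an eigenvalue of $L_{00}$, we get $\lambda_2(L_{00})\le -n<0$.
\end{itemize}
This is the paper's proof. It yields $3+2(k+1)+2(\ell+1)+(k+1)(\ell+1)$ negative eigenvalues with no oscillation theory. As a by-product, it shows a posteriori that $g$ must change sign.
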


\begin{proof}
Following the ideas in \cite{P1,P2}, each negative eigenvalue of the operator
\[
S_{11}(\eta)
=
-\frac{1}{1+(f^\prime)^2}
\left(
\eta''+b_{k\ell}\eta'
\right)
-|A|^2\eta-n\eta
\]
contributes to the stability index. Moreover, \(-n\) is an eigenvalue of \(S_{11}\). This is an expected eigenvalue coming from the coordinates of the Gauss map.

We also have that each negative eigenvalue of the operator
\[
S_{21}(\eta)
=
-\frac{1}{1+(f^\prime)^2}
\left(
\eta''+b_{k\ell}\eta'
\right)
-|A|^2\eta-n\eta
+\frac{k}{f^2}\eta
\]
contributes \(k+1\) toward the stability index, and we know that \(-n\) is an expected eigenvalue.

Likewise, each negative eigenvalue of the operator
\[
S_{12}(\eta)
=
-\frac{1}{1+(f^\prime)^2}
\left(
\eta''+b_{k\ell}\eta'
\right)
-|A|^2\eta-n\eta
+\frac{\ell}{f_2^2}\eta
\]
contributes \(\ell+1\) toward the stability index, and we know that \(-n\) is an expected eigenvalue.

Finally, each negative eigenvalue of the operator
\[
S_{22}(\eta)
=
-\frac{1}{1+(f^\prime)^2}
\left(
\eta''+b_{k\ell}\eta'
\right)
-|A|^2\eta-n\eta
+\frac{k}{f^2}\eta
+\frac{\ell}{f_2^2}\eta
\]
contributes \((k+1)(\ell+1)\) toward the stability index. By Theorem \ref{thm1}, \(-n\) is an unexpected eigenvalue of \(S_{22}\).

By comparing the operators \(S_{21}\) and \(S_{12}\) with \(S_{22}\), the Rayleigh characterization of eigenvalues implies that \(-n\) is at least the second eigenvalue of \(S_{21}\) and \(S_{12}\). Now comparing \(S_{11}\) with \(S_{12}\), we obtain that \(-n\) is at least the third eigenvalue of \(S_{11}\). It follows that the stability index is at least
\[
3+2(k+1)+2(\ell+1)+(k+1)(\ell+1)=k\ell+3k+3\ell+8.
\]
\end{proof}

\end{document}